\def\BibTeX{{\rm B\kern-.05em{\sc i\kern-.025em b}\kern-.08em
    T\kern-.1667em\lower.7ex\hbox{E}\kern-.125emX}}
\newtheorem{theorem}{Theorem}[section]
\newtheorem{definition}[theorem]{Definition}
\newtheorem{condition}[theorem]{Condition}
\newtheorem{problem}[theorem]{Problem}
\newtheorem{example}[theorem]{Example}
\newtheorem{remark}[theorem]{Remark}
\newtheorem{procedure}[theorem]{Offline AFD Procedure}
\begin{document}
\title{Active Fault Diagnosis for a Class of  Nonlinear Uncertain Systems: \\A Distributionally Robust Approach}
\author{Ioannis Tzortzis, and Marios M. Polycarpou
\thanks{``This work is funded by the European Union's Horizon 2020 research and  innovation programme under grant agreement No 739551 (KIOS CoE) and from  the Republic of Cyprus through the Directorate General for European  Programmes, Coordination and Development.'' }
\thanks{``This is an extended and revised version of a preliminary conference paper that was presented in ECC'19 \cite{Tzortzis:2019}.'' }
\thanks{The authors are with KIOS Research and Innovation Center of Excellence and the Department of Electrical and Computer Engineering, University of Cyprus (e-mails: \{tzortzis.ioannis,\ mpolycar\}@ucy.ac.cy).}}

\maketitle

\begin{abstract}
This work is devoted to the development of a distributionally robust active fault diagnosis approach for a class of nonlinear systems, which takes into account any  ambiguity in distribution information of the uncertain model parameters. More specifically, a new approach is presented using the total variation distance metric as an information constraint, and as a measure for the separation of multiple models based on the similarity of their output probability density functions. 
A practical aspect of the proposed approach is that different levels of ambiguity may be assigned to the models pertaining to the different fault scenarios.
The main feature of the proposed solution is that it is expressed in terms of distribution's first and second moments, and hence, can be applied to alternative distributions other than normal. In addition, necessary and sufficient conditions of optimality are derived, and an offline active fault diagnosis procedure is provided to exemplify the implementation of the proposed scheme. The effectiveness of the proposed distributionally robust  approach is demonstrated through an application to a three-tank benchmark system under multiple fault scenarios. 
\end{abstract}

\begin{IEEEkeywords}
Active fault diagnosis, distributionally robust, optimal input design, total variation distance 
\end{IEEEkeywords}

\section{Introduction}
Active Fault Diagnosis (AFD) aims at designing  optimal separating input signals that enhance the capability for detection and isolation of faults in dynamical systems. Classical methods for AFD are developed under the assumption that the probability distribution associated with uncertain parameters present in the models is available and accurate. In practice, however,  distribution information  is constructed based on modeling considerations, prior knowledge about the parameters, observation collected data, etc. If poor and/or partial and incomplete distribution information is considered then model-based systems  will be sensitive to  errors, and consequently, may compromise the performance of the optimal separating input signals designed for effective fault diagnosability. In practice, this problem becomes particularly acute given  that most modern systems are too complex to model accurately in their entirety. Hence, robustness against ambiguity in distribution information of uncertain model parameters is one of the fundamental and most challenging issues in designing and analyzing  AFD schemes.

During the last few years, several robustness approaches have been developed in the area of AFD to deal with measurement noise, process disturbances, uncertain model parameters, model errors, model nonlinearities, etc. Based on the various models of uncertainty,  AFD approaches can be classified into two main categories, namely, deterministic approaches and probabilistic approaches. Deterministic approaches typically treat uncertainties as unknown bounded signals with known bounds. Good overviews of  deterministic approaches, both for linear and nonlinear systems, can be found in \cite{Nikoukhah:2006,Andjelkovic:2008,Scott:2013,Raimondo:2013,Paulson:2014,MARSEGLIA2017,Yang21,XU2021}. Deterministic AFD approaches with bounded uncertainties, under a robust formulation, have also received  attention in \cite{Yao20,Nikoukhah:2000,Ashari:2012,Chen:2012,Streif:2013,Zhang:2002}. Probabilistic approaches for AFD are based on the uncertainties being  described by stochastic processes with known probability distribution functions \cite{Kerestecioglu:1993,Blackmore:2008}. Related work on probabilistic AFD which deals with the design of auxiliary separating input signals  of
multiple nonlinear models related to faultless and faulty system operations can also be found in \cite{Mesbah:2014,Mesbah:2014b,Scott:2013,Poulsen:2008}. 
The rationale behind this paper, in contrast to the existing literature, is to address an issue of pivotal importance in AFD, that is, the effect of ambiguous distribution information and its impact on the performance of the optimal separating input signals designed for effective fault diagnosability. 

Motivated by the above discussion, and by the fact that in many practical AFD applications distribution information of uncertain model parameters is known ambiguously, in this paper we investigate  a \emph{distributionally robust} AFD  approach using Total Variation (TV) distance metric: (i) as a tool for codifying the level of ambiguity, and (ii) as a measure for the separation of multiple models (which  are used to describe the faultless as well as the faulty system). To handle ambiguous distribution information, the proposed approach minimizes a worst-case payoff over an ambiguity set of  probability density functions (PDFs). Model output PDFs  are obtained through the evolution of the dynamical models under parametric uncertainties, utilizing certain estimation methods, such as the maximum likelihood  estimation. Although such an estimation may be imprecise, mainly because of the ambiguity in the distribution information, it is often our best guess and contains valuable information about the stochastic nature of the underlying uncertainty. A natural way to reduce the influence of ambiguity on the performance of the optimal separating input signals is to consider some level of perturbation or deviation from the estimated  PDFs, henceforth, called the \emph{nominal} PDFs. Specifically, we assume that the \emph{true} PDFs of model outputs are not completely known but contained in a pre-specified ambiguity set, defined by the TV distance metric,  and are within a  distance $R\in[0,1]$ from the nominal PDF. The AFD is then  performed with respect to the true  PDFs, that is, by designing an optimal  input signal that enables the separation of the support of the true PDFs of model outputs pertaining to the different fault scenarios.

The proposed distributionally robust AFD approach is quite general and can handle ambiguity issues related to parametric uncertainty in various model-based applications \cite{CHENG2021107353,Cheng9340542,Stojanovic,Tzortzis9157997}. The following important features  are distinguished. First, by appropriately adjusting the TV distance parameter $R\in[0,1]$ we can control the degree of conservatism of the AFD problem. In particular, for $R=0$ the distributionally robust AFD problem reduces to the standard stochastic AFD problem without ambiguity. On the other hand, as the TV distance parameter  increases, then highly ambiguous scenarios are taken into account. Second,  we are allowed to assign different levels of ambiguity in distribution information, a feature that is ignored in the extant literature.  This side-steps the issue of having to assign the same TV distance between  models  which correspond to different scenarios as could be the case, for example, between the faultless model and a novel faulty model.
The main contributions of this work are summarized as follows.
\begin{enumerate}
\item An upper bound which can be evaluated in closed form is provided for the evaluation of the common area between an arbitrary number of probability density functions. This problem reformulation benefits since it is expressed in terms of distribution's first and second moments, and hence, can be applied to alternative distributions other than normal;
\item The problem of ambiguous distribution information is addressed by deriving the necessary and sufficient Karush-Kuhn-Tucker conditions of optimality;
\item An  algorithm, together with an offline  procedure, are provided to solve the AFD problem and to exemplify the implementation of the proposed  scheme. Through the proposed procedure a piecewise constant input signal emerges which when applied to the true system the separation of multiple models at different time instances is enabled;
\item To illustrate the effect of ambiguous distribution information, an in-depth discussion and comparison of the developed results are provided through an application to a three-tank system under multiple fault scenarios.
\end{enumerate}

The remainder of this paper is organized as follows. In Section \ref{section.problem.form}  the robust AFD problem is formulated and the total variation distance metric is introduced. In Section  \ref{sec.robust.sol}, the solution of the robust AFD problem is derived, a robust AFD algorithm is provided, and an AFD scheme is proposed. In Section \ref{sec.example}, a three-tank system under multiple fault scenarios is considered as an application of the proposed methodology. Finally, concluding remarks  and future research directions are given in Section \ref{sec.conclusion}.

\section{Problem Formulation}\label{section.problem.form}
\subsection{Model description}
We consider continuous-time nonlinear models of the following form
\begin{equation}\label{eq1}
m^{[j]}:\begin{cases}
               \dot{x}^{[j]}(t)=g^{[j]}\left(x^{[j]}(t),u(t),\theta^{[j]}\right)\\
               y^{[j]}(t)=h^{[j]}\left(x^{[j]}(t),u(t),\theta^{[j]},v(t)\right)
            \end{cases}
\end{equation}
where the superscript $j\in I:=\{0,1,\dots,n_f\}$ denotes the model index and its associated set of variables, $n_f$ is the number of possible faults, with one faultless model $m^{[0]}$, and $n_f$ faulty models. The functions $g^{[j]}$ and $h^{[j]}$, denote the nonlinear system dynamics and the model output, respectively. The variables $x^{[j]}\in \mathcal{X}\subset \mathbb{R}^{n_x}$, $u\in \mathcal{U}\subset \mathbb{R}^{n_u}$, $\theta^{[j]}\in \mathbb{R}^{n_\theta}$ and $y^{[j]}\in\mathbb{R}^{n_y}$, denote the system states, input, time-invariant (uncertain) parameters, and outputs available for fault diagnosis, respectively.  In addition, the time-varying variable $v(t)$ denotes the measurement noise, and represents the reliability of the measurements. 

A fault is defined as an unpermitted deviation of at least one characteristic property of the system from the acceptable, usual, nominal condition \cite{Isermann:2006}. For active fault diagnosis, dynamical models given by \eqref{eq1}  are used to describe the behaviour of the faultless and the faulty system under uncertain model parameters.
In this paper, it is assumed that:
\begin{enumerate}
\item[(i)] system faults are known apriori and are separable, 
\item[(ii)]  the input signal $u(t)\in \mathcal{U}$ is  the same for all models and can attain values from an admissible set $\mathcal{U}$, and 
\item[(iii)] the initial condition $x^{[j]}(0)=x_0^{[j]}$, or its distribution, is known for all $\forall j\in I$.
\end{enumerate}
To simplify the notation,  the dimensions of all models, $m^{[j]}$, $j\in I$, and their associated set of variables are kept the same. 
 Due to the ambiguous distribution information of uncertain model parameters, considered in this paper, the robust AFD problem is investigated under the following condition.
\begin{condition}\label{cond1}
The uncertain model parameters $\theta^{[j]}$, $j\in I$, consist of independent  distributed random variables which are not fully known but with pre-specified nominal PDFs, $f(\theta^{[j]})$, $j\in I$.
\end{condition}

By the evolution of dynamical models under  Condition \ref{cond1},  the nominal distribution information provided for uncertain model parameters $\theta^{[j]}$, $j\in I$, leads to probabilistic distributed outputs with corresponding nominal PDFs, denoted by $f(y^{[j]}(t))$, $j\in I$. Going a step further, we assume that the true PDFs of model outputs, denoted by $\tilde{f}(y^{[j]}(t))$, $j\in I$, are not fully known but contained in an ambiguity set centered around the  nominal PDFs $f(y^{[j]}(t))$,  with corresponding radius $R^{[j]}\in [0,1]$, $j\in I$. 
\begin{remark}
Small values of the TV distance parameter $R^{[\cdot]}$ (i.e., values  close to zero) imply that the true and the nominal PDFs are close to each other. However, as  the value of $R^{[\cdot]}$ increases, then the ambiguity set increases, which in turn implies that highly uncertain  scenarios are taken into account.\footnote{Intuitively, the TV distance parameter $R^{[\cdot]}$ can be seen as an alternative to the use of multiple separate models for the representation of uncertainty.}
\end{remark}
The key point here is that if the support of the true output PDFs overlap then measurements may make the different fault scenarios non-isolable \cite{Mesbah:2014}. Hence, the aim of this paper is, in addition to designing an optimal separating  input signal which when injected to the true system existing faults can be effectively detected and isolated, to restrict also the influences of ambiguous distribution information about uncertain model parameters on the overall system.

Based on the above discussion, the  robust AFD problem under ambiguous distribution information is stated as follows.

\subsubsection*{Robust AFD Problem}
 Design a distributionally robust input signal $u(t)\in \mathcal{U}$,  with respect to ambiguous distribution information of uncertain model parameters, so that when applied to the true system, actual measurements can be associated with high probability with only one fault scenario $m^{[j]}$, $j\in I$.
   
Hence, the proposed AFD problem deals with the design of a distributionally robust separating input signal which when applied to the true system complete fault isolation can be achieved.    We note that, in general, the designed optimal separating input signal can be applied either on its own or as an auxiliary to the nominal/stabilizing control input signal (i.e., the system can have a nominal/stabilizing input signal under normal operation). However, in that case, one must investigate possible side effects of the additional auxiliary separating input signal to the system behaviour. Ideally, the designer would like to keep the faultless model with auxiliary input as close as possible to the faultless model without auxiliary input, while at the same time, try to maximize the  distance between the faultless model and the faulty models  with auxiliary input signal. Although this case is not addressed in the current work  it will be subject to future investigation.

   Next, the TV distance metric is introduced.

\subsection{Total variation distance metric}
The robust AFD problem will be formulated in terms of the TV distance metric for which the definition follows.  

\begin{definition}[TV Distance Metric]  Let $(\mathcal{X},\mathcal{B}(\mathcal{X}))$ denote an arbitrary measurable space and $\mathcal{M}_1(\mathcal{X})$ the set of probability measures on $\mathcal{X}$. The TV distance between two probability measures is a function $d_{TV}:\mathcal{M}_1(\mathcal{X})\times\mathcal{M}_1(\mathcal{X})\longmapsto [0,\infty)$, defined by \cite{dunford}
\begin{equation}\label{tvdfn}
d_{TV}(p,q)=\sup_{P\in \mathcal{P}(\mathcal{X})}\sum_{F_i\in P}\lvert p(F_i)-q(F_i)\rvert
\end{equation}
where $p,q\in \mathcal{M}_1(\mathcal{X})$ and $\mathcal{P}(\mathcal{X})$ denotes the collection of all finite partitions $P=\{F_1,F_2,\dots,F_{n_p}\}$ on $\mathcal{X}$.
\end{definition}

Let $p$ and $q$ be absolutely continuous with respect to the Lebesgue measure so that  $f_{p}(x)\triangleq \frac{dp}{dx}(x)$, $f_{q}(x)\triangleq \frac{dq}{dx}(x)$ (i.e., $f_p(\cdot)$, $f_q(\cdot)$ are the probability density functions of $p(\cdot)$ and $q(\cdot)$, respectively). Then, the TV distance metric is given by \cite{pinsker}
\begin{equation}\label{TV.eq}
d_{TV}(p,q)=\frac{1}{2}\int_{\mathcal{X}}\lvert f_{p}(x)-f_{q}(x)\rvert dx.
\end{equation}
Two PDFs  $f_{p}$, $f_{q}$ with a TV distance equal to $1-a$ share a common area of size $a$. Thus the more they overlap, the closer they are,  and hence, the greater is their common area. 

In this work, the use of TV distance is twofold, in particular, it is used: (1) as a measure for the evaluation of the common area between true model output PDFs which correspond to multiple fault scenarios, due to its natural and intuitive interpretation applicable to the problem at hand, and, (2) as an information constraint to model ambiguity based on the nominal and the true model output PDFs. The emphasis on TV distance to model ambiguity is motivated by its generality, since it applies to any arbitrary distribution induced either by linear or nonlinear models.

In the literature  of AFD, different optimality criteria have been proposed and used for the separation of multiple models. A related formulation, which deals with the separation of multiple models, through the Bhattacharyya coefficient, is developed in \cite{Blackmore:2008}, where the authors provide an upper bound on the Bayes probability of model selection error leading to quadratic cost functions. Ambiguity sets constructed based on alternative distances such as Kullback-Leibler divergence, Hellinger distance, Wasserstein metric can be found for example in \cite{doi:10.1287/mnsc.1120.1641,MohajerinEsfahani2018}. An elaborated review of some of the most important metrics on probability measures and the relationships between them can be found in \cite{gibbs}. In addition, various applications based on the TV distance metric and its relation to other distance metrics can also be found in \cite{ctlthem:2013}.

\section{Robust Active Fault Diagnosis}\label{sec.robust.sol}
\subsection{Optimal input design under ambiguous distribution information}\label{subsec.robust.sol.partial}
In this section, we formulate the input design problem under ambiguous distribution information of uncertain model parameters.  For convenience,  we denote by $f^{[j]}\triangleq {f}(y^{[j]}(t))$, $\tilde{f}^{[j]}\triangleq \tilde{f}(y^{[j]}(t))$, and by $d_{TV}^{t_m}(\tilde{f}^{[i]},\tilde{f}^{[j]})$ the TV distance (in the sense of \eqref{TV.eq}) between the PDFs $\tilde{f}^{[i]}$ and $\tilde{f}^{[j]}$  at time instant $t=t_m\in[0,T]$. The specific problem is stated as follows.

\begin{problem}\label{problem2}
Find an optimal input signal $u(t)\in \mathcal{U}$, $t\in [0,T]$, and optimal PDFs $\tilde{f}^{[j]}$, $\forall j\in I$, which solve the following minimax optimization problem:
\begin{align}\label{opt.cost.problem2}
\min_{u(t)}\max_{\tilde{f}^{[j]},\forall j\in I}&\ \sum_{i=0}^{n_f-1}\sum_{j=i+1}^{n_f}\left(1-d_{TV}^{t_m}\left(\tilde{f}^{[i]},\tilde{f}^{[j]}\right)\right)\\
\mbox{subject to:}&\   \dot{x}^{[j]}(t)=g^{[j]}\left(x^{[j]}(t),u(t),\theta^{[j]}\right),\ x^{[j]}(0){=}x^{[j]}_0 \nonumber\\
&\  y^{[j]}(t)=h^{[j]}\left(x^{[j]}(t),u(t),\theta^{[j]},v(t)\right)\nonumber\\
&\  d_{TV}^{t_m}\left(\tilde{f}^{[j]},f^{[j]}\right)\leq R^{[j]}, \quad  \forall j{\in} I\nonumber\\
&\  u(t)\in \mathcal{U},\quad x(t)\in \mathcal{X}\nonumber
\end{align}
where $t_m\in[0,T]$ denotes the time instant over which the common area between the output PDFs is evaluated, and $R^{[j]}$, $\forall j{\in} I$, denotes the TV distance parameter which is given and belongs to $[0,1]$.
\end{problem}

Our objective is to design a distributionally robust input signal $u(t)\in \mathcal{U}$ that minimizes the worst-case cost given ambiguous information about the PDFs of model outputs.  In particular, to select an input signal $u(t)\in\mathcal{U}$  that enables the separation of the true output PDFs, $\tilde{f}^{[j]}$, $j\in I$, by minimizing their common area at time instant $t_m\in [0,T]$, subject to hard input and state constraints. Hard input and state constraints guarantee that the system states and inputs remain bounded for all $t\geq 0$,  (i.e., there exists a region $D\subseteq \mathcal{X} \times \mathcal{U}$ such that $x(t),u(t)\in D$, $\forall t\geq 0$). Note that, although in Problem \ref{problem2} the separation of multiple models is achieved at a single time instant $t_m$ over the finite time horizon $[0,T]$, in general, the separation at several time instances it is also possible. Toward  this, in Section \ref{sec.algorithm} we propose an offline AFD procedure in which a piecewise constant input signal is designed so that when applied to the true system over $[0,T]$ enables the separation of multiple models at different time instances $t_{m_k}\in [0,T]$, $k=1,2,\dots$. 

Alternatively, Problem \ref{problem2} can be viewed as a minimax dynamic game with two players opposing each others actions. Specifically, in this minimax game formulation the objective of player I is to choose an input signal $u(t)\in \mathcal{U}$ to minimize the common area between the model output true PDFs pertaining to the different fault scenarios, while the objective of player II is to choose the model output true PDFs, over all possible PDFs within the TV distance inequality constraint, so that the common area  is maximized.
\begin{remark}
The inequality constraint, or alternatively, the ambiguity set in Problem \ref{problem2} also accounts for scenarios in which $R^{[i]}\neq R^{[j]}$, $i,j\in I$. This means that we are allowed to assign different levels of ambiguity between the nominal and the true model output PDFs for different fault scenarios. This feature is of practical importance with applications in cases where we don't have the same amount of confidence  in model output PDFs pertaining to the different fault scenarios. 
\end{remark}

The desired nominal PDFs, $f^{[j]}$, $j\in I$, of model outputs are obtained by performing Monte Carlo simulations of nonlinear model \eqref{eq1}, using the nominal distribution information of uncertain model parameters. For presentation purposes, throughout the paper it is assumed that  the nominal probability density functions, $f^{[j]}$, $j\in I$, are obtained by normal approximations of the resulting probability histograms using Maximum Likelihood (ML) estimation \cite{Theodoridis2008}. That is, given $y_i^{[j]}\in \mathbb{R}^{n_y}$, $i=1,2,\dots,N$, $j\in I$, the ML estimates of the unknown  mean value and variance that define the nominal PDFs are given by
\begin{equation}
\mu_j=\frac{1}{N}\sum_{i=1}^Ny_i^{[j]}\quad \mbox{and}\quad \sigma^2_j =\frac{1}{N}\sum_{i=1}^N(y_i^{[j]}-\mu_j)^2.
\end{equation}


Although special attention is given to the case of normal approximations, we note that  if the approximation of the probability histograms by normal distributions is poor and not accurate enough, one may choose alternative approximating distributions that provide  better fitness results to the resulting histograms.\footnote{An alternative approach for addressing the problem of poor and not accurate approximations may be obtained by working directly on the resulting probability histograms utilizing equation \eqref{tvdfn}.} Indeed, since the main results of this work will be derived in terms of the distribution's first and second moments, alternative distributions may also be used (i.e., see Example \ref{toyexample}). It is worth noting however  that, the designer always needs to balance the trade-off between precision and computational complexity. In addition, since by definition, normal distributions can take any value from $-\infty$ to $+\infty$, when the value is far away from the mean then it is assumed that its probability is practically negligible.\footnote{This is adjusted by appropriately selecting the interval of confidence.}

Given that the nominal output PDFs, $f^{[j]}$, $\forall j\in I$, are obtained by normal approximations, then it follows that the true output PDFs are given by
\begin{equation}
\tilde{f}^{[j]}\triangleq \tilde{f}^{[j]}(y|\tilde{\mu}_j,\tilde{\sigma}_j)=\frac{1}{\tilde{\sigma}_j\sqrt{2\pi}}\exp\left(-(y-\tilde{\mu}_j)^2/2\tilde{\sigma}_j^2\right)
\end{equation}
where $\tilde{\mu}_j$ and $\tilde{\sigma}_j$ denote the mean and standard deviation of model $m^{[j]}$, $j\in I$, respectively.  Notice that,  inner optimization of Problem \ref{problem2} needs to be solved for all possible mean $\tilde{\mu}_j$ and standard deviation $\tilde{\sigma}_j$, $\forall j\in I$, of the true output PDFs. Hence, the inner optimization of Problem \ref{problem2}  becomes
\begin{align}\label{max.problem}
&\max_{\tilde{f}^{[j]},\forall j\in I}\sum_{i=0}^{n_f-1}\sum_{j=i+1}^{n_f}\left(1-d_{TV}^{t_m}\left(\tilde{f}^{[i]},\tilde{f}^{[j]}\right)\right)\nonumber\\
&=\max_{\substack{\tilde{\mu}_j,\tilde{\sigma}_j \\ \forall j\in I}}\sum_{i=0}^{n_f-1}\ \sum_{j=i+1}^{n_f}\left(1-d_{TV}^{t_m}\left(\tilde{f}^{[i]}(y|\tilde{\mu}_i,\tilde{\sigma}_i),\tilde{f}^{[j]}(y|\tilde{\mu}_j,\tilde{\sigma}_j)\right)\right)
\end{align}
with TV distance constraint
\begin{equation}\label{TVconst}
 d_{TV}^{t_m}\left(\tilde{f}^{[j]}(y|\tilde{\mu}_j,\tilde{\sigma}_j),f^{[j]}(y|\mu_j,\sigma_j)\right)\leq R^{[j]}.
\end{equation}
When clear from the context, the conditioning of the PDF's $\tilde{f}^{[j]},f^{[j]}$, on the means, $\tilde{\mu}_j,\mu_j$, and standard deviations, $\tilde{\sigma}_j,\sigma_j$, respectively, will be dropped. 
Next, we provide  an upper bound on the common area between a finite number of normal PDFs in terms of TV distance. Subsequently, we use this upper bound to solve the inner optimization of Problem \ref{problem2}.

\begin{theorem}\label{thm1}
The common area in terms of TV distance metric between a finite number of normally distributed model output PDFs, denoted by $\tilde{f}^{[i]}(y|\tilde{\mu}_i,\tilde{\sigma}_i)$, $i\in I\triangleq \{0,1,\dots,n_f\}$,  is upper bounded by
\begin{equation}
\sum_{i=0}^{n_f-1}\sum_{j=i+1}^{n_f}\left(1-d_{TV}\left(\tilde{f}^{[i]}(y|\tilde{\mu}_i,\tilde{\sigma}_i),\tilde{f}^{[j]}(y|\tilde{\mu}_j,\tilde{\sigma}_j)\right)\right)
\leq \sum_{i=0}^{n_f-1}\sum_{j=i+1}^{n_f}\left(\frac{1}{2}+\frac{2\tilde{\mu}_i\tilde{\mu}_j+\tilde{\sigma}_i^2+\tilde{\sigma}_j^2}{2(\tilde{\mu}_i^2+\tilde{\mu}_j^2+\tilde{\sigma}_i^2+\tilde{\sigma}_j^2)}\right).\label{upperboundgeneral}
\end{equation}
\end{theorem}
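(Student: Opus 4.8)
The plan is to reduce \eqref{upperboundgeneral} to a single pairwise inequality and then prove that inequality with a Cauchy--Schwarz estimate involving only the first two moments. Since both sides of \eqref{upperboundgeneral} are sums over the same index pairs $(i,j)$ with $0\le i<j\le n_f$, it suffices to show, for each such pair, that $1-d_{TV}(\tilde f^{[i]},\tilde f^{[j]})$ is bounded above by the corresponding summand on the right. A one-line algebraic rearrangement shows that this summand equals $1-\dfrac{(\tilde\mu_i-\tilde\mu_j)^2}{2(\tilde\mu_i^2+\tilde\mu_j^2+\tilde\sigma_i^2+\tilde\sigma_j^2)}$, so the pairwise claim is equivalent to the lower bound
\[
d_{TV}\bigl(\tilde f^{[i]},\tilde f^{[j]}\bigr)\ \ge\ \frac{(\tilde\mu_i-\tilde\mu_j)^2}{2\,(\tilde\mu_i^2+\tilde\mu_j^2+\tilde\sigma_i^2+\tilde\sigma_j^2)} .
\]

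To establish this, I would work from the density representation \eqref{TV.eq}. Setting $\delta\triangleq\tilde f^{[i]}-\tilde f^{[j]}$, the mean gap is $\tilde\mu_i-\tilde\mu_j=\int y\,\delta(y)\,dy$, so $|\tilde\mu_i-\tilde\mu_j|\le\int|y|\,|\delta(y)|\,dy$. Writing $|\delta|=|\delta|^{1/2}|\delta|^{1/2}$ and applying Cauchy--Schwarz bounds this by $\bigl(\int y^2|\delta|\,dy\bigr)^{1/2}\bigl(\int|\delta|\,dy\bigr)^{1/2}$. The second factor equals $\sqrt{2\,d_{TV}(\tilde f^{[i]},\tilde f^{[j]})}$ by \eqref{TV.eq}; the first is controlled using $|\delta|\le\tilde f^{[i]}+\tilde f^{[j]}$, which gives $\int y^2|\delta|\,dy\le(\tilde\mu_i^2+\tilde\sigma_i^2)+(\tilde\mu_j^2+\tilde\sigma_j^2)$ since the second moment of a density with mean $\mu$ and variance $\sigma^2$ is $\mu^2+\sigma^2$. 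Squaring and dividing through yields the displayed lower bound, and summing over all pairs $(i,j)$ recovers \eqref{upperboundgeneral}.

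The calculation is routine; the only real subtlety is that the usual variational characterization of $d_{TV}$ is in terms of bounded test functions, whereas the quantity we must compare against, the mean, is the integral of the unbounded map $y\mapsto y$. Routing the estimate through the Cauchy--Schwarz split of $|\delta|$ sidesteps this and simultaneously explains why the final bound involves only the first and second moments, so that, as the statement's phrasing anticipates, the normality of the $\tilde f^{[j]}$ enters only through the parametrization $(\tilde\mu_j,\tilde\sigma_j)$ and is not otherwise used; one needs merely that the output PDFs have finite second moments, which makes every integral above well defined.
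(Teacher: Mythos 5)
Your proposal is correct and follows essentially the same route as the paper's proof: both bound the mean gap $\left|\int y\,(\tilde f^{[i]}-\tilde f^{[j]})\,dy\right|$ via the Cauchy--Schwarz split $|\delta|=|\delta|^{1/2}|\delta|^{1/2}$, control $\int y^2|\delta|\,dy$ using $|\delta|\le\tilde f^{[i]}+\tilde f^{[j]}$ and the second moments $\tilde\mu^2+\tilde\sigma^2$, identify $\int|\delta|\,dy=2\,d_{TV}$, and then rearrange pairwise before summing. Your closing observation that only finite second moments are needed matches the paper's own emphasis that the bound depends solely on the first two moments.
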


\begin{proof}
Choose a pair $(k,l)\in I$ of normally distributed output PDFs. Then
\begin{align*}
&\left(\int_{\mathcal{Y}} y\left(\tilde{f}^{[k]}(y|\tilde{\mu}_k,\tilde{\sigma}_k)-\tilde{f}^{[l]}(y|\tilde{\mu}_l,\tilde{\sigma}_l)\right)dy\right)^2\\
&\qquad \leq  \left(\int_{\mathcal{Y}} |y| |\tilde{f}^{[k]}-\tilde{f}^{[l]}| dy \right)^2\\
&\qquad =\left(\int_{\mathcal{Y}}|y|\left(|\tilde{f}^{[k]}-\tilde{f}^{[l]}|\right)^{\frac{1}{2}}\left(|\tilde{f}^{[k]}-\tilde{f}^{[l]}|\right)^{\frac{1}{2}}dy \right)^2\\
&\qquad \overset{(a)}\leq  \int_{\mathcal{Y}}y^2  |\tilde{f}^{[k]}-\tilde{f}^{[l]}| dy \int_{\mathcal{Y}}|\tilde{f}^{[k]}-\tilde{f}^{[l]}|dy\\
&\qquad \overset{(b)}\leq \int_{\mathcal{Y}}y^2\left(\tilde{f}^{[k]}+\tilde{f}^{[l]}\right)dy \int_{\mathcal{Y}}|\tilde{f}^{[k]}-\tilde{f}^{[l]}|dy\\
&\qquad \overset{(c)}= \left(\tilde{\sigma}_k^2+\tilde{\mu}_k^2+\tilde{\sigma}_l^2+\tilde{\mu}_l^2\right)\int_{\mathcal{Y}}|\tilde{f}^{[k]}-\tilde{f}^{[l]}|dy\\
&\qquad =2\left(\tilde{\sigma}_k^2+\tilde{\mu}_k^2+\tilde{\sigma}_l^2+\tilde{\mu}_l^2\right)d_{TV}(\tilde{f}^{[k]},\tilde{f}^{[l]}),
\end{align*}
where $(a)$ follows by Cauchy-Schwarz inequality, $(b)$ using the identity $|a-b|\leq a+b$ provided that $a,b$ are positive, and $(c)$ using $E[x^2]\triangleq \int x^2f(x)dx$ and $E[x^2]\triangleq \mbox{Var}[x]+E[x]^2$. It follows that,
\begin{align}\label{upperbound}
&1-d_{TV}(\tilde{f}^{[k]},\tilde{f}^{[l]})\leq 1-\frac{\left(\int_{\mathcal{Y}}y\Big(\tilde{f}^{[k]}-\tilde{f}^{[l]}\Big)dy\right)^2}{2\Big(\tilde{\mu}_k^2+\tilde{\mu}_l^2+\tilde{\sigma}_k^2+\tilde{\sigma}_l^2\Big)}\nonumber\\
&=1-\frac{\Big(\tilde{\mu}_k-\tilde{\mu}_l\Big)^2}{2\Big(\tilde{\mu}_k^2+\tilde{\mu}_l^2+\tilde{\sigma}_k^2+\tilde{\sigma}_l^2\Big)}=\frac{1}{2}+\frac{2\tilde{\mu}_k\tilde{\mu}_l+\tilde{\sigma}_k^2+\tilde{\sigma}_l^2}{2(\tilde{\mu}_k^2+\tilde{\mu}_l^2+\tilde{\sigma}_k^2+\tilde{\sigma}_l^2)}.
\end{align}
Since the common area between each possible pair of model output PDFs is upper bounded by \eqref{upperbound}, then it follows that the common area between all pairwise model output PDFs is upper bounded by \eqref{upperboundgeneral}.
\end{proof}

The above result  provides an upper bound on the common area between the  model output PDFs, which can be evaluated in closed form. Next, we  deal with the TV distance constraint given by \eqref{TVconst}.

\begin{figure}[t]
        \centering
\includegraphics[width=.8\linewidth]{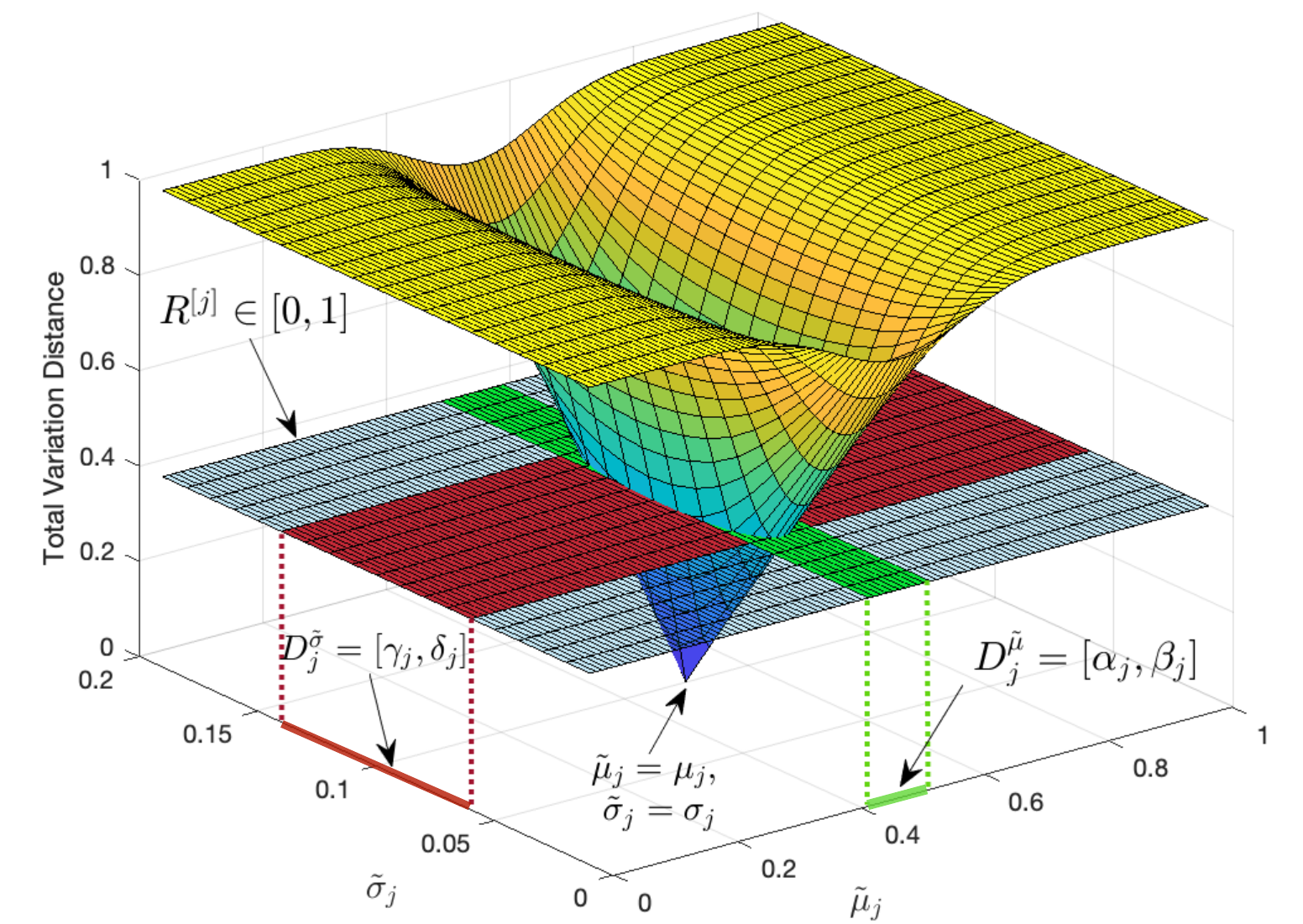}
\caption{TV distance constraint and the feasible ROI for a known nominal normal PDF $f^{[j]}(u,|\mu_j,\sigma_j)$ and  $R^{[j]}\in [0,1]$.}
\label{fig_upper_constraint}
\end{figure}

For each $j\in I$, given the nominal PDF $f^{[j]}(y|\mu_j,\sigma_j)$,  and the TV distance parameter $R^{[j]}\in [0,1]$, then one can evaluate the feasible regions of interest over which the pair  $(\tilde{\mu}_j,\tilde{\sigma}_j)$ achieves the maximum of the upper bound \eqref{upperboundgeneral}, and at the same time, guarantee that the TV distance constraint \eqref{TVconst} is not violated. In particular, the feasible regions of interest $ROI(j)$, $\forall j\in I$, can be defined as follows.
\begin{multline}\label{range.constraint}
ROI(j):\left\{(\tilde{\mu}_j,\tilde{\sigma}_j)\in \mathcal{X}\times\mathcal{X}:d_{TV}(\tilde{f}^{[j]},f^{[j]})\leq R^{[j]}\right\}\\ 
{\Longleftrightarrow} \tilde{\mu}_j\in D_j^{\tilde{\mu}}\triangleq[\alpha_j, \beta_j]\subseteq \mathcal{X}, \ \ \tilde{\sigma}_j\in D_j^{\tilde{\sigma}}\triangleq[\gamma_j, \delta_j]\subseteq \mathcal{X}.
\end{multline}
The fact that both $D_j^{\tilde{\mu}}\subseteq \mathcal{X}$ and $D_j^{\tilde{\sigma}}\subseteq \mathcal{X}$ follow from: (i) the state constraints of Problem \ref{problem2}, and (ii) variance upper bound results, such as,  if the random variable $X$ has a continuous unimodal PDF on $[a,b]$ with at least one mode in $[a,b]$, then $\sigma_X=(b-a)/3$ (see \cite{Seaman06}).
A graphical illustration of the TV distance constraint, as well as, the feasible regions of interest $D_j^{\tilde{\mu}}$ and $D_j^{\tilde{\sigma}}$, are shown in Figure \ref{fig_upper_constraint} for a known nominal normal PDF and with TV distance parameter  pre-selected within $[0,1]$. We note that, if the TV distance between the true and the nominal output PDFs is characterised through an equality constraint (i.e., see \cite{Tzortzis:2019}), then \eqref{TVconst} it is addressed through a feasible set of points. 

 By the above discussion, we have that both \eqref{max.problem} and \eqref{TVconst}, can be equivalently expressed as follows.
\begin{align*}
&\max_{\substack{\tilde{\mu}_j\in D_j^{\tilde{\mu}} \\ \tilde{\sigma}_j\in D_j^{\tilde{\sigma}}}}\sum_{i=0}^{n_f-1}\ \sum_{\mathclap{j=i+1}}^{n_f}\left(1-d_{TV}^{t_m}\left(\tilde{f}^{[i]}(y|\tilde{\mu}_i,\tilde{\sigma}_i),\tilde{f}^{[j]}(y|\tilde{\mu}_j,\tilde{\sigma}_j)\right)\right)\\
&=\sum_{i=0}^{n_f-1}\sum_{j=i+1}^{n_f}\left(1 -d_{TV}^{t_m}\left(\tilde{f}^{[i]}(y|\tilde{\mu}_i^*,\tilde{\sigma}^*_i),\tilde{f}^{[j]}(y|\tilde{\mu}_j^*,\tilde{\sigma}^*_j)\right)\right),\\
 &\quad  \mbox{for}\ (\tilde{\mu}_j^*,\tilde{\sigma}_j^*)\in \mathrlap{\arg}\max_{\tilde{\mu}_j\in D_j^{\tilde{\mu}},\ {\tilde{\sigma}_j\in D_j^{\tilde{\sigma}}}}\mbox{(Upper Bound  \eqref{upperboundgeneral})},\ \forall j\in I
\end{align*}
where $\arg\max(\cdot)$ it is the set of  $(\tilde{\mu}_j,\tilde{\sigma}_j)$ from the feasible regions of interest $D_j^{\tilde{\mu}}$, and $D_j^{\tilde{\sigma}}$, $\forall j\in I$, that achieve the maximum of the upper bound \eqref{upperboundgeneral}, and subsequently, are applied to evaluate the common area in terms of TV distance. This problem reformulation benefits since upper bound \eqref{upperboundgeneral} is expressed in terms of the distribution's first and second moments. This is an easier problem to solve than that of evaluating the distribution's density function, which for some distributions might be a tedious task. Moreover, since  the upper bound on the common area given by \eqref{upperboundgeneral} is concave and continuously differentiable with respect to $(\tilde{\mu},\tilde{\sigma})$, then the classic Langrangian method of multipliers can be applied. 

Toward this end, we define the Langrangian as follows
\begin{align}\label{langrangian}
\mathbb{L}(\tilde{\mu},\tilde{\sigma},\lambda^{\tilde{\mu}},s^{\tilde{\mu}},\lambda^{\tilde{\sigma}},s^{\tilde{\sigma}})&\triangleq  \sum_{i=0}^{n_f-1}\sum_{j=i+1}^{n_f}\left(\frac{1}{2}+\frac{2\tilde{\mu}_i\tilde{\mu}_j+\tilde{\sigma}_i^2+\tilde{\sigma}_j^2}{2(\tilde{\mu}_i^2+\tilde{\mu}_j^2+\tilde{\sigma}_i^2+\tilde{\sigma}_j^2)}\right)\\
&\quad +\sum_{j=0}^{n_f}\left(\lambda^{\tilde{\mu}}_j(\beta_j-\tilde{\mu}_j)+s^{\tilde{\mu}}_j(-\alpha_j+\tilde{\mu}_j)\right) +\sum_{j=0}^{n_f}\left(\lambda^{\tilde{\sigma}}_j(\delta_j-\tilde{\sigma}_j)+s^{\tilde{\sigma}}_j(-\gamma_j+\tilde{\sigma}_j)\right)\nonumber
\end{align}
where $\lambda^{\tilde{\mu}}_j,s^{\tilde{\mu}}_j$ and $\lambda^{\tilde{\sigma}}_j,s^{\tilde{\sigma}}_j$, $\forall j\in I$, are the Langrangian multipliers associated with the constraints given by \eqref{range.constraint}.
By the Karush-Kuhn-Tucker (KKT) theorem, the following conditions are necessary and sufficient for optimality
\begin{subequations}\label{comp}
\begin{align}
&\frac{\partial}{\partial \tilde{\mu}_j}\mathbb{L}|_{\tilde{\mu}=\tilde{\mu}^*,\lambda^{\tilde{\mu}}=\lambda^{\tilde{\mu},*},s^{\tilde{\mu}}=s^{\tilde{\mu},*},\lambda^{\tilde{\sigma}}=\lambda^{\tilde{\sigma},*}, s^{\tilde{\sigma}}= s^{\tilde{\sigma},*}}=0,  \label{comp01}\\
&\frac{\partial}{\partial \tilde{\sigma}_j}\mathbb{L}|_{\tilde{\mu}=\tilde{\mu}^*,\lambda^{\tilde{\mu}}=\lambda^{\tilde{\mu},*},s^{\tilde{\mu}}=s^{\tilde{\mu},*},\lambda^{\tilde{\sigma}}=\lambda^{\tilde{\sigma},*}, s^{\tilde{\sigma}}= s^{\tilde{\sigma},*}}=0,  \label{comp02}\\
&\tilde{\mu}_j^*-\beta_j\leq 0,\hspace{.5cm} \lambda^{\tilde{\mu},*}_j(\tilde{\mu}_j^*-\beta_j)=0, \hspace{.65cm} \lambda^{\tilde{\mu},*}_j\geq 0, \label{comp1}\\
&\tilde{\mu}_j^*-\alpha_j\geq 0,\hspace{.5cm} s^{\tilde{\mu},*}_j(-\tilde{\mu}_j^*+\alpha_j)= 0, \hspace{.45cm} s^{\tilde{\mu},*}_j\geq 0,\label{comp2}\\
&\tilde{\sigma}_j^*-\delta_j\leq 0,\hspace{.5cm} \lambda^{\tilde{\sigma},*}_j(\tilde{\sigma}_j^*-\delta_j)=0,  \hspace{.75cm} \lambda^{\tilde{\sigma},*}_j\geq 0, \label{comp6}\\
&\tilde{\sigma}_j^*-\gamma_j\geq 0,\hspace{.5cm} s^{\tilde{\sigma},*}_j(-\tilde{\sigma}_j^*+\gamma_j)= 0,  \hspace{.45cm} \ s^{\tilde{\sigma},*}_j \geq 0,\label{comp8}
\end{align}
\end{subequations}
for all $j\in I$. By \eqref{comp01} and \eqref{comp02}, differentiating \eqref{langrangian} with respect to $\tilde{\mu}$ and  $\tilde{\sigma}$, respectively, the following equations are obtained
\begin{subequations}
\begin{align}
&\frac{\partial}{\partial \tilde{\mu}_j}\mathbb{L}|_{\tilde{\mu}=\tilde{\mu}^*,\lambda^{\tilde{\mu}}=\lambda^{\tilde{\mu},*},s^{\tilde{\mu}}=s^{\tilde{\mu},*},\lambda^{\tilde{\sigma}}=\lambda^{\tilde{\sigma},*}, s^{\tilde{\sigma}}= s^{\tilde{\sigma},*}}\nonumber\\
&\qquad=\sum_{\substack{i\in I\\ i\neq j}}\frac{(\tilde{\mu}^*_i-\tilde{\mu}^*_j)\Big((\tilde{\mu}^*_i)^2+\tilde{\mu}^*_i\tilde{\mu}^*_j+(\tilde{\sigma}^*_i)^2+(\tilde{\sigma}^*_j)^2\Big)}{\Big((\tilde{\mu}^*_j)^2+(\tilde{\mu}^*_i)^2+(\tilde{\sigma}^*_i)^2+(\tilde{\sigma}^*_j)^2\Big)^2}+s^{\tilde{\mu},*}_j-\lambda^{\tilde{\mu},*}_j=0,\\
&\frac{\partial}{\partial \tilde{\sigma}_j}\mathbb{L}|_{\tilde{\mu}=\tilde{\mu}^*,\lambda^{\tilde{\mu}}=\lambda^{\tilde{\mu},*},s^{\tilde{\mu}}=s^{\tilde{\mu},*},\lambda^{\tilde{\sigma}}=\lambda^{\tilde{\sigma},*}, s^{\tilde{\sigma}}= s^{\tilde{\sigma},*}}\nonumber\\
&\qquad=\sum_{\substack{i\in I\\ i\neq j}}\frac{\tilde{\sigma}^*_j(\tilde{\mu}^*_j-\tilde{\mu}^*_i)^2}{\Big((\tilde{\mu}^*_j)^2+(\tilde{\mu}^*_i)^2+(\tilde{\sigma}^*_j)^2+(\tilde{\sigma}^*_i)^2\Big)^2}+s^{\tilde{\sigma},*}_j-\lambda^{\tilde{\sigma},*}_j=0.
\end{align}
\end{subequations}
By  the complementary conditions \eqref{comp1}-\eqref{comp8}, for each $j\in I\triangleq \{0,1,\dots,n_f\}$, the optimal solution $\tilde{\mu}^*_j$ ($\tilde{\sigma}^*_j$) may be  either at the boundary, in which case, $\tilde{\mu}^*_j-\beta_j=0$ or $-\tilde{\mu}^*_j+\alpha_j=0$ ($\tilde{\sigma}^*_j-\delta_j=0$ or $-\tilde{\sigma}^*_j+\gamma_j=0$), or at the interior, in which case,  $\lambda^{\tilde{\mu},*}_j,s^{\tilde{\mu},*}_j=0$ ($\lambda^{\tilde{\sigma},*}_j,s^{\tilde{\sigma},*}_j=0$).
By the concavity of the objective function and by the linearity of the constraints an optimal solution always exists, that is, given $(\tilde{\mu}^*,\tilde{\sigma}^*)$ with $s^{\bullet,*}$ and $\lambda^{\bullet,*}$ satisfying the KKT conditions, then the pair $(\tilde{\mu}^*,\tilde{\sigma}^*)$ maximizes the objective function subject to the constraints. 
\begin{remark}
The proposed distributionally robust AFD approach is not limited to normal approximations. Indeed, as the next example shows, the results of this work can be readily transferred to alternative distributions other than normal, for which the first and second moments exist. This fact is illustrated in more detail for two well-known distributions, namely, (a) the beta distribution, and (b) the gamma distribution.
\end{remark}
\begin{example}\label{toyexample}
Consider the case in which the resulting probability histograms  are approximated by a gamma distribution with shape parameter $\tilde{a}$ and rate parameter $\tilde{b}$. Given that the nominal PDFs are obtained by a gamma approximation then it follows that the true output PDFs, $\tilde{f}^{[j]}$, $\forall j\in I$, are given by
\begin{equation*}
\tilde{f}^{[j]}(y|\tilde{a}_j,\tilde{b}_j)=\frac{\tilde{b}_j(\tilde{b}_jy)^{\tilde{a}_j-1}e^{-\tilde{b}_jy}}{\Gamma(\tilde{a}_j)},\quad y\in (0,\infty),\quad \tilde{a}_j,\tilde{b}_j>0
\end{equation*}
where $\Gamma(\tilde{a}_j)$ is the gamma function. The mean and the variance of a gamma distribution  are given by $\tilde{a}_j/\tilde{b}_j$ and $\tilde{a}_j/\tilde{b}^2_j$, respectively. By applying the proposed approach to obtain the optimal pair $(\tilde{\mu}^*_j,\tilde{\sigma}^*_j)$, $j\in I$, then the shape and rate parameters that produce the desired gamma distribution are given by:
\begin{equation*}
\tilde{a}_j=\left(\frac{\tilde{\mu}_j^*}{\tilde{\sigma}_j^*}\right)^2,\quad \mbox{and}\quad \tilde{b}_j=\frac{\tilde{\mu}_j^*}{\left(\tilde{\sigma}_j^*\right)^2}.
\end{equation*}
Another useful distribution for histogram approximation over finite intervals is the beta distribution with shape parameters $(\tilde{a},\tilde{b})$. The mean and the variance of the beta distribution are given by $\tilde{a}_j/(\tilde{a}_+\tilde{b}_j)$ and $\tilde{a}_j\tilde{b}_j/(\tilde{a}_j+\tilde{b}_j)^2(\tilde{a}_j+\tilde{b}_j+1)$, respectively. Following the proposed approach for finding the optimal pair  $(\tilde{\mu}^*,\tilde{\sigma}^*)$, then the shape parameters that produce the desired beta distribution are given by:
\begin{equation*}
\tilde{a}_j=\frac{\tilde{\mu}_j^*\left(\tilde{\mu}_j^*-\left(\tilde{\mu}_j^*\right)^2-\left(\tilde{\sigma}_j^*\right)^2\right)}{\left(\tilde{\sigma}_j^*\right)^2},\quad \mbox{and}\quad \tilde{b}_j=\frac{\tilde{a}_j\left(1-\tilde{\mu}_j^*\right)}{\tilde{\mu}_j^*}.
\end{equation*}
\end{example}

\subsection{Robust AFD algorithm and implementation of the AFD scheme in practice}\label{sec.algorithm}
In this section, we first give Robust AFD  Algorithm \ref{algorithm}  to solve  Problem \ref{problem2}. Then,  AFD procedure \ref{procedure1} is provided and its implementation in practice is discussed.
\begin{algorithm}[htbp]
\caption{Robust AFD Algorithm}\label{algorithm}
Input data: 1) $T$: time horizon, 2) $m^{[j]}$: faultless and faulty models,  3) $x^{[j]}(0)$: initial states, 4) nominal distribution information of  $\theta^{[j]}$, 5)  $R^{[j]}\in [0,1]$: TV distance parameter, 6) input and state constraints. 

At each  iteration of  outer optimization in \eqref{opt.cost.problem2}, do:
\begin{enumerate}
\item[1:] For each model $j\in I$:
\begin{enumerate}
\item using the initial states and the nominal  distribution information of  $\theta^{[j]}$,  perform Monte Carlo simulations of  \eqref{eq1}, to obtain the probability histograms under parametric uncertainties,
\item approximate  the resulting probability histograms to construct the nominal output PDFs, $f^{[j]}$,
\item using  $R^{[j]}$ and the nominal PDFs, $f^{[j]}$,  evaluate the bounded regions of interest, $D^{\tilde{\mu}}_j$ and $D^{\tilde{\sigma}}_j$, as described in \eqref{range.constraint}.
\end{enumerate}
\item[2:]  At each  iteration of  inner optimization in \eqref{opt.cost.problem2}, do:
\begin{enumerate}
\item For each model  $j\in I$, using $\tilde{\mu}_j\in D^{\tilde{\mu}}_j$ and $\tilde{\sigma}_j\in D^{\tilde{\sigma}}_j$,   compute the upper bound given by the right-hand side of \eqref{upperboundgeneral}.
\item Solve the inner optimization in \eqref{opt.cost.problem2}, by applying the KKT conditions \eqref{comp}, to obtain $\tilde{\mu}_j^*\in D^{\tilde{\mu}}_j$ and $\tilde{\sigma}_j^*\in D^{\tilde{\sigma}}_j$, for all $j\in I$. 
\end{enumerate}
\item[3:] Compute the common area between the true output PDFs $\tilde{f}^{[j]}$, for all $j\in I$, as given by \eqref{opt.cost.problem2}.
\item[4:] Solve the outer optimization in \eqref{opt.cost.problem2} to obtain a distributionally robust input $u^*\in \mathcal{U}$.
\end{enumerate}
\end{algorithm}

\begin{figure*}[t]
\centering
\includegraphics[width=\linewidth]{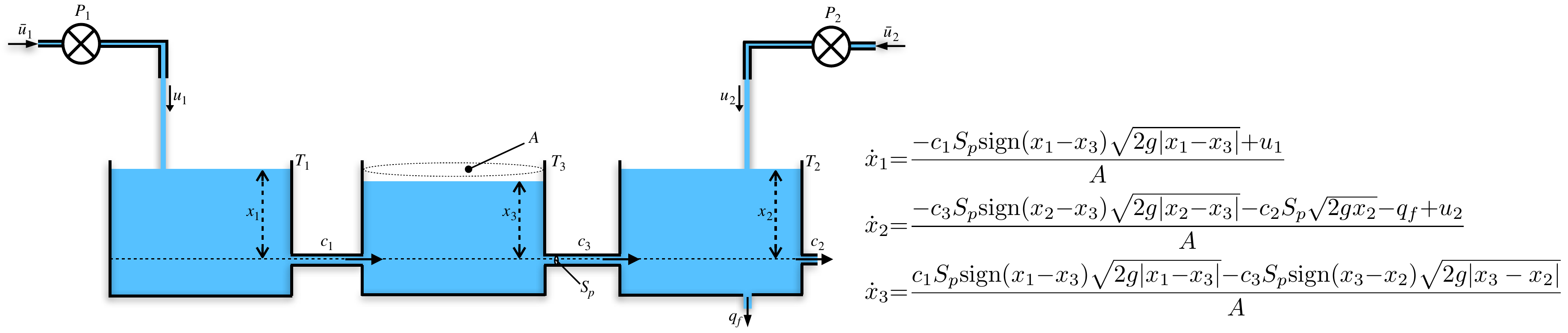}
\caption{The three-tank system}\label{fig_new}
\end{figure*}


%
 To exemplify the implementation of the proposed AFD scheme, next we provide the procedure to be followed.
\begin{procedure}\label{procedure1} Let $t_{m}=t_{m_1},t_{m_2},\hdots\in [0,T]$ denote the measurements time in which observations are to be taken. Also, let $k=0$ and $t_{m_0}=0$. Then:\begin{description}[\setlabelwidth{Step 1:}]
\item[Step 1:] At time $t=t_{m_k}$, call Algorithm \ref{algorithm} (with the updated state values) to design an optimal input signal $u^*(t)\in {\mathcal U}$, $t\in[t_{m_k},t_{m_{k+1}}]$, which enables the separation of multiple models at  time $t=t_{m_{k+1}}$.
\item[Step 2:] Using the designed optimal input signal $u^*(t)$ of Step 1, simulate the multiple models from time $t=t_{m_{k}}$ up to time $t=t_{m_{k+1}}$, and save the state values $x^{[j]}(t_{m_{k+1}})=x^{[j]}_{t_{m_{k+1}}}$.
\item[Step 3:] Let $k=k+1$, and repeat steps 1-3. 
\end{description}\end{procedure}
Through the proposed offline AFD procedure a distributionally robust input signal $u^*(t)$, $t\in [0,T]$, is obtained, which when applied to the true system enables the detection and isolation of faults by comparing the true system measurements with the faultless and faulty model predictions. Based on the proposed AFD procedure, the overall function of the designed input signal  $u(t)\in {\mathcal U}$ can be thought as a  piecewise function of the following form
\begin{equation*}
u(t)=\begin{cases}
              \bar{u}(0),\quad\quad \mbox{if}\ t<t_{m_1}\\
               \bar{u}(t_{m_k}),\quad \mbox{if}\ t_{m_k}\leq t<t_{m_{k+1}},\ k{=}1,\dots,|m|{-}1\\
              \bar {u}(t_{m_{|m|}}),\  \mbox{if}\ t_{m_{|m|}}\leq t<T
            \end{cases}
\end{equation*}
with $|m|$ denoting the number of different time instances.


Next, we consider a three-tank benchmark problem for AFD, drawn from \cite{Mesbah:2014}, and modified to illustrate the effect of highly uncertain and ambiguous distribution information. Further information  regarding the three-tank system can also be found in \cite{Zhang:2002,Seliger:1999}.

\section{Application: Robust AFD for the three-tank system}\label{sec.example}



\begin{table}[b]
\begin{minipage}[b]{.485\linewidth}
   \centering
      \caption{Nominal   parameters}
   \begin{tabular}{ |c| }
     \hline
     $c_1\sim \mathcal{N}(1,0.0025)$ \\ \hline
      $c_2\sim \mathcal{N}(0.8,0.0025)$ \\ \hline
      $c_3\sim \mathcal{N}(1,0.0025)$ \\ \hline
      $r\sim\mathcal{N}(0.002,10^{-6})$\\ \hline
      $\alpha\sim \mathcal{N}(0.6,4\times 10^{-4})$\\ [1ex]
     \hline
   \end{tabular}
   \label{table}
\end{minipage}\
\begin{minipage}[b]{.485\linewidth}
   \centering
      \caption{True  parameters}
   \begin{tabular}{ | c | }
     \hline
     $c_1\sim \mathcal{N}(1,0.01)$ \\ \hline
      $c_2\sim \mathcal{N}(1,0.01)$ \\ \hline
      $c_3\sim \mathcal{N}(1,0.01)$ \\ \hline
      $r\sim\mathcal{N}(0.02,10^{-6})$\\ \hline
      $\alpha\sim \mathcal{N}(0.6,4\times 10^{-2})$\\ [1ex]
     \hline
   \end{tabular}
   \label{table1}
\end{minipage}
\end{table}

A three-tank system consisting of three identical cylinders $T_1$, $T_2$, and $T_3$, with cross section area $A=0.0154m^2$, is depicted in Figure \ref{fig_new}. The tanks are interconnected by connection pipes with cross section area $S_p=5\times10^{-5}m^2$, and the liquid entering tanks $T_1$ and $T_2$ via the pumps $P_1$ and $P_2$, respectively. The liquid flow rate from pump $P_i$, $i=1,2$, is denoted by $u_i$, and is constrained to take values in  $0\leq u_i\leq 10^{-4}\ m^3/s$, $i=1,2$.
The three-tank system is modeled using mass balance equations and Toricelli's law, as shown in Figure \ref{fig_new}, where $x_i$, $i=1,2,3$, denote the  liquid levels in tanks $T_1$, $T_2$, and $T_3$, respectively, and are constrained to take values in $0\leq x_i\leq 0.75\ m$, $i=1,2,3$. In addition, $g=9.81m/s^2$ denotes the gravity acceleration,  and $q_f$ denotes the outflow rate from tank $T_2$ due to leakage ($q_f=0$ in the faultless case). Moreover, $c_i$, $i=1,2,3,$  denotes the nondimensional outflow  coefficients which are considered to be uncertain. We assume that only the liquid level in tank $T_3$ is available for measurement, i.e., $y^{[i]}(t)=x_3^{[i]}(t),\forall i \in I$, and is corrupted by white noise, which has a normal distribution with zero mean and variance $0.025$. 

Three operation scenarios are considered, the faultless scenario  and two faulty scenarios:
\begin{description}[\setlabelwidth{Fault A}]
\item[Fault A.]  A multiplicative actuator fault in pump $P_1$ defined by  $u_1=\bar{u}_1+(\alpha-1)\bar{u}_1$, where $\alpha$ is an uncertain parameter characterizing the magnitude of the fault, and $\bar{u}_1$ is the supply flow rate in the faultless case.
\item[Fault B.] A leakage in tank $T_2$ with a circular leak of uncertain radius $r$, and outflow rate $q_f=c_2\pi r^2\sqrt{2gx_2}$.
\end{description}
To illustrate the effect of ambiguous distribution information, we choose those uncertain model parameters which are of interest to describe the system behaviour under the different fault scenarios. 
In particular, the nominal and true distribution information of uncertain model parameters is summarized in Tables \ref{table} and  \ref{table1}, respectively.  In all simulations, the time horizon is set equal to $T=3000s$, and the  initial conditions $x_i(0)$, $i=1,2,3$, are distributed uniformly on the interval $[0,0.15]$. By numerical integration  of the three-tank system (given in Figure \ref{fig_new}) in Matlab,  the histogram of the output is obtained for the three operation scenarios.
In particular, histograms are constructed based on $10000$ Monte Carlo simulations of the three-tank system under parametric uncertainties. The corresponding PDFs are obtained by normal approximation using Matlab's function \emph{fitdist}.  In addition, for the design of the optimal input signal $u^*\in \mathcal{U}$,  under the faultless and faulty operation scenarios, Matlab's global optimization toolbox is employed, while the KKT conditions are applied using the \emph{fsolve} solver.
In what follows, we compare AFD input design results under two possible cases: 
\begin{enumerate}
\item[(C1)] solution of the AFD optimization problem using the nominal distribution information (as listed in Table \ref{table}), and with TV distance parameter set equal to $R^{[j]}=0$ for all $j=1,2,3$. Typically, here we assume that the nominal distribution information provided for uncertain model parameters is correct. We refer to the input signal designed under this  case as the \emph{nominal input signal}, and 
\item[(C2)] solution of the AFD optimization problem using the nominal distribution information (as listed in Table \ref{table}), and with TV distance parameter set equal to $R^{[j]}=1$ for all $j=1,2,3$. Typically, here we assume  that the nominal distribution information provided for uncertain model parameters is poor and not accurate enough. We refer to the input signal designed under this case  as the \emph{distributionally robust input signal}.
\end{enumerate}
For cases (C1) and (C2), measurements are taken every $100s$. The  computational time  required to solve the optimization problem and obtain the nominal input signal under (C1), and the distributionally robust input signal under (C2), is approximately 35  and 50 minutes, respectively (using an iMac, Quad-Core i7, 4.2GHz, 32GB).

Figure \ref{Hill3}\subref{fig3.1}-\subref{fig3.2} depicts the nominal  and the distributionally robust input signals designed under cases (C1) and (C2), respectively. For illustrating the effectiveness of the proposed AFD method, next we apply  to the true model (described by the true distribution information, as listed in Table \ref{table1}) both the nominal input signal and the distributionally robust input signal.
Figure \ref{Hill3}\subref{fig3.3}-\subref{fig3.4} depicts the  common area, in terms of TV distance metric, evaluated at each measurement time under both designed input signals. 
Figure \ref{Hill3}\subref{fig3.4} confirms that the distributionally robust input signal restricts the influences of ambiguous distribution information, while Figure \ref{Hill3}\subref{fig3.3} confirms that the nominal input signal does not.  
In addition, Figure \ref{Hill3}\subref{fig2.3}-\subref{fig2.4} depicts the estimated output PDFs of the liquid level in tank $T_3$ at a single time instant, $t_m=3000s$. Comparing Figure \ref{Hill3}\subref{fig2.3} with Figure \ref{Hill3}\subref{fig2.4}, it can be seen that under the  nominal input signal the estimated  output PDFs of the true model share a common area of size $a\approx 0.25$, while under the distributionally robust input signal the corresponding PDFs share a common area of size $a\approx 0$.
This is due to the fact that the nominal input signal is designed only for the case in which the true and the nominal output PDFs are equal, and hence, effective fault diagnosability of the real system is not ensured. In contrast, to  nominal input signal, the distributionally robust input signal is designed by considering the worst-case PDFs over all possible PDFs within the TV distance ambiguity set, and hence, effective fault diagnosability of the real system is ensured even in  cases of highly uncertain scenarios.
\begin{figure}[htbp]
\centering
\subfloat[][Nominal input signal]{
\label{fig3.1} 
\includegraphics[ width=0.48\linewidth]{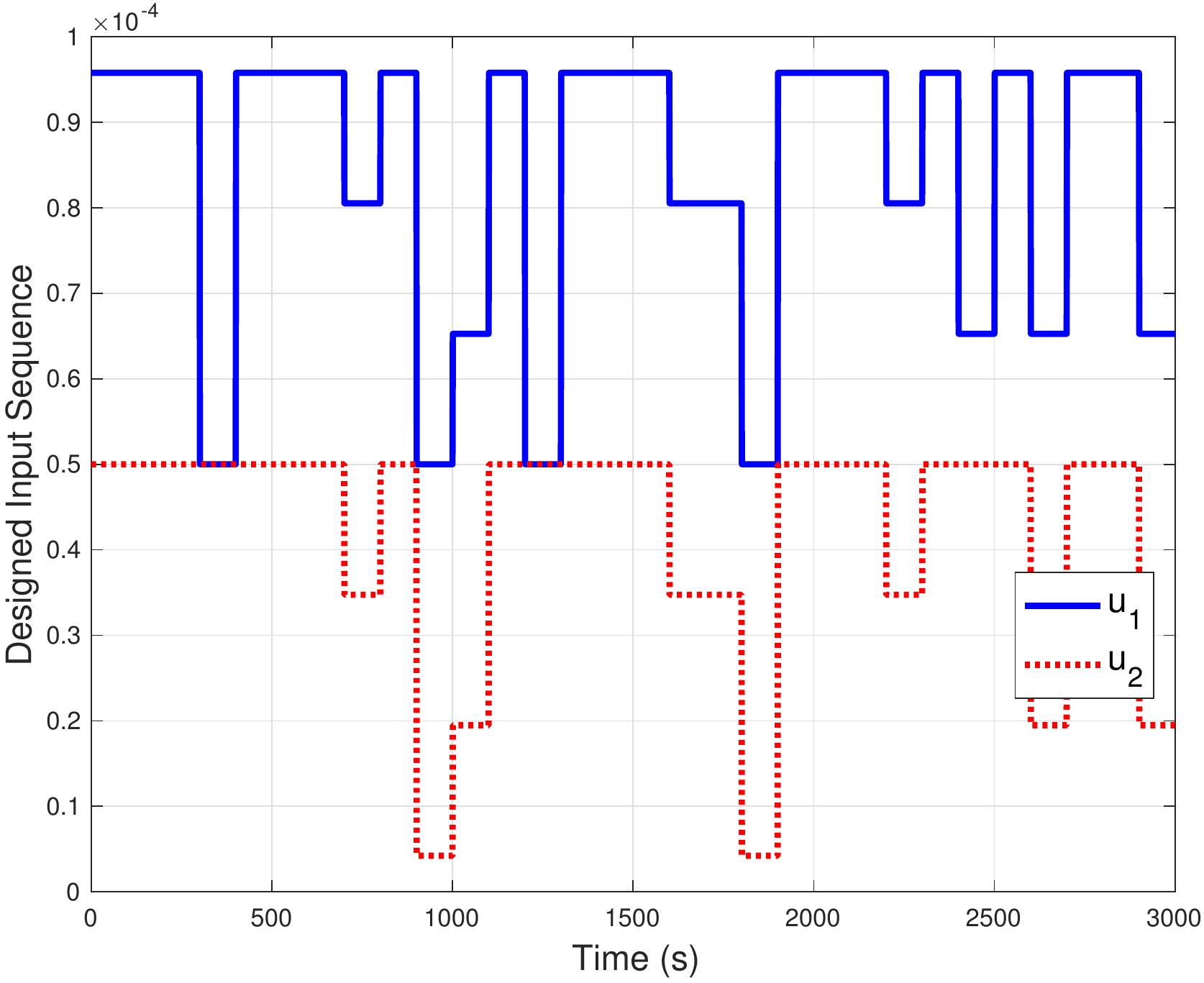}}\hspace{.02cm}
\subfloat[][Distributionally robust input signal]{
\label{fig3.2} 
\includegraphics[width=0.48\linewidth]{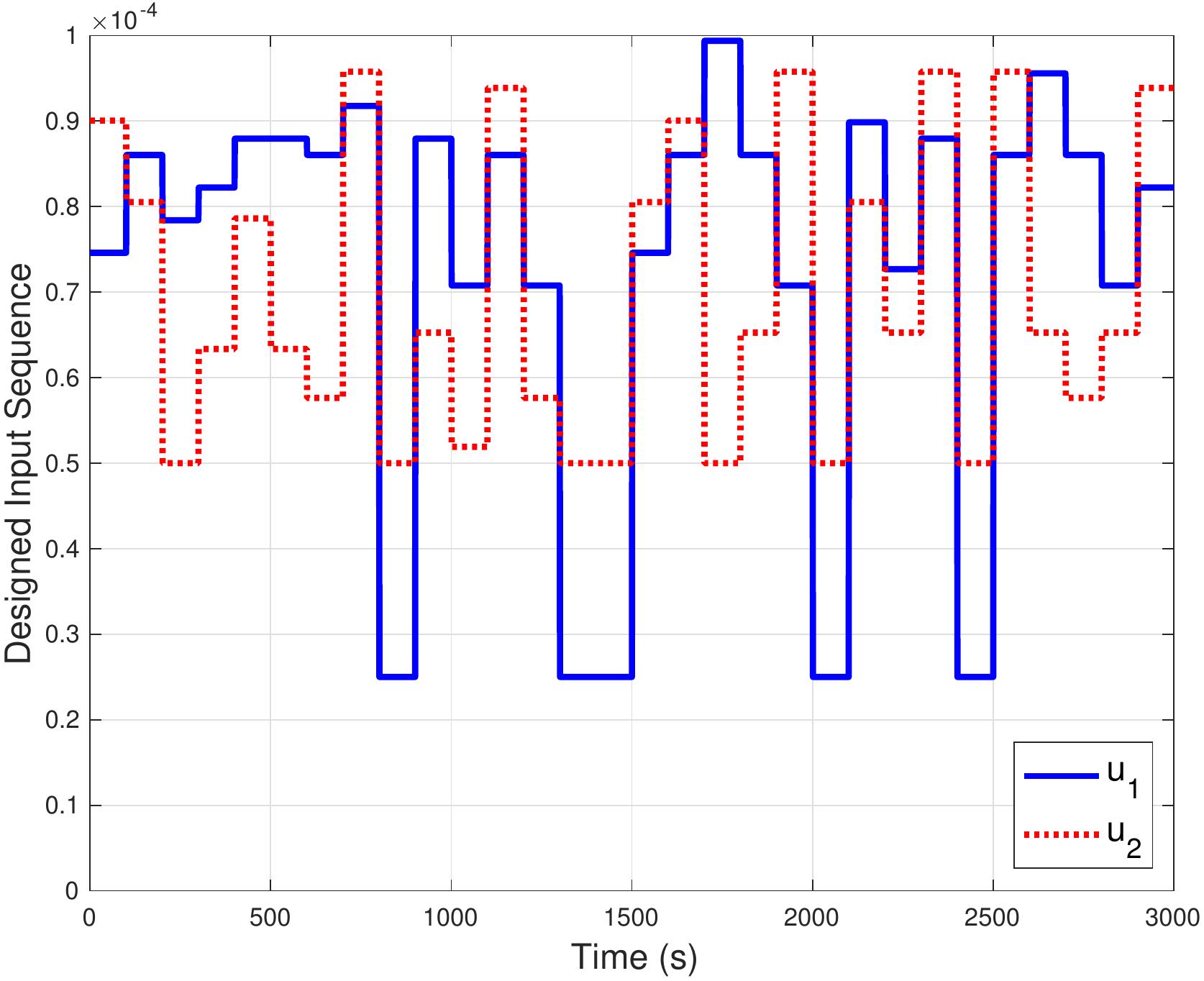}}\\
\subfloat[][PDFs under the nominal input signal]{
\label{fig2.3} 
\includegraphics[ width=0.48\linewidth]{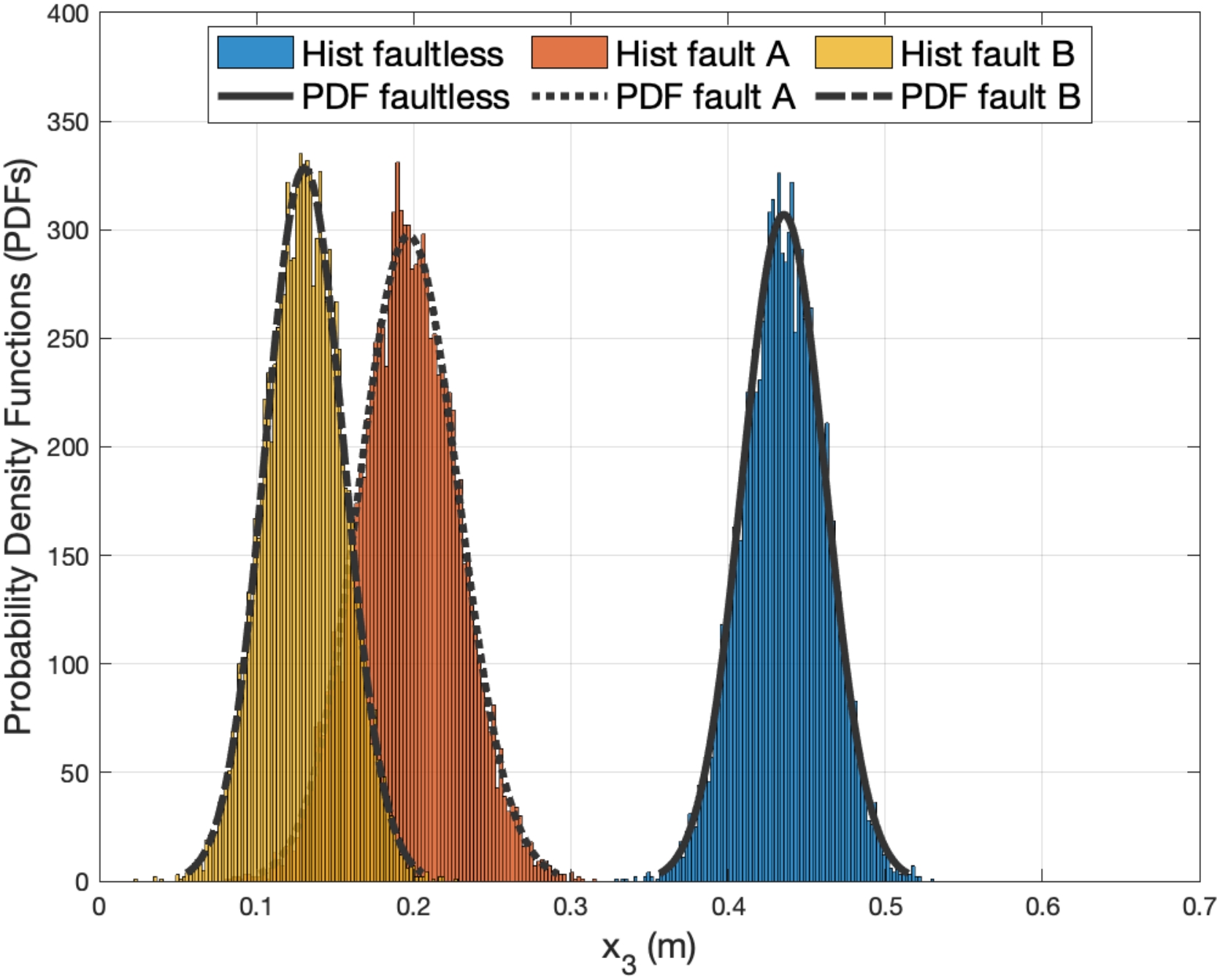}}\hspace{.02cm}
\subfloat[ttt][PDFs under the distributionally robust input signal]{
\label{fig2.4} 
\includegraphics[width=0.48\linewidth]{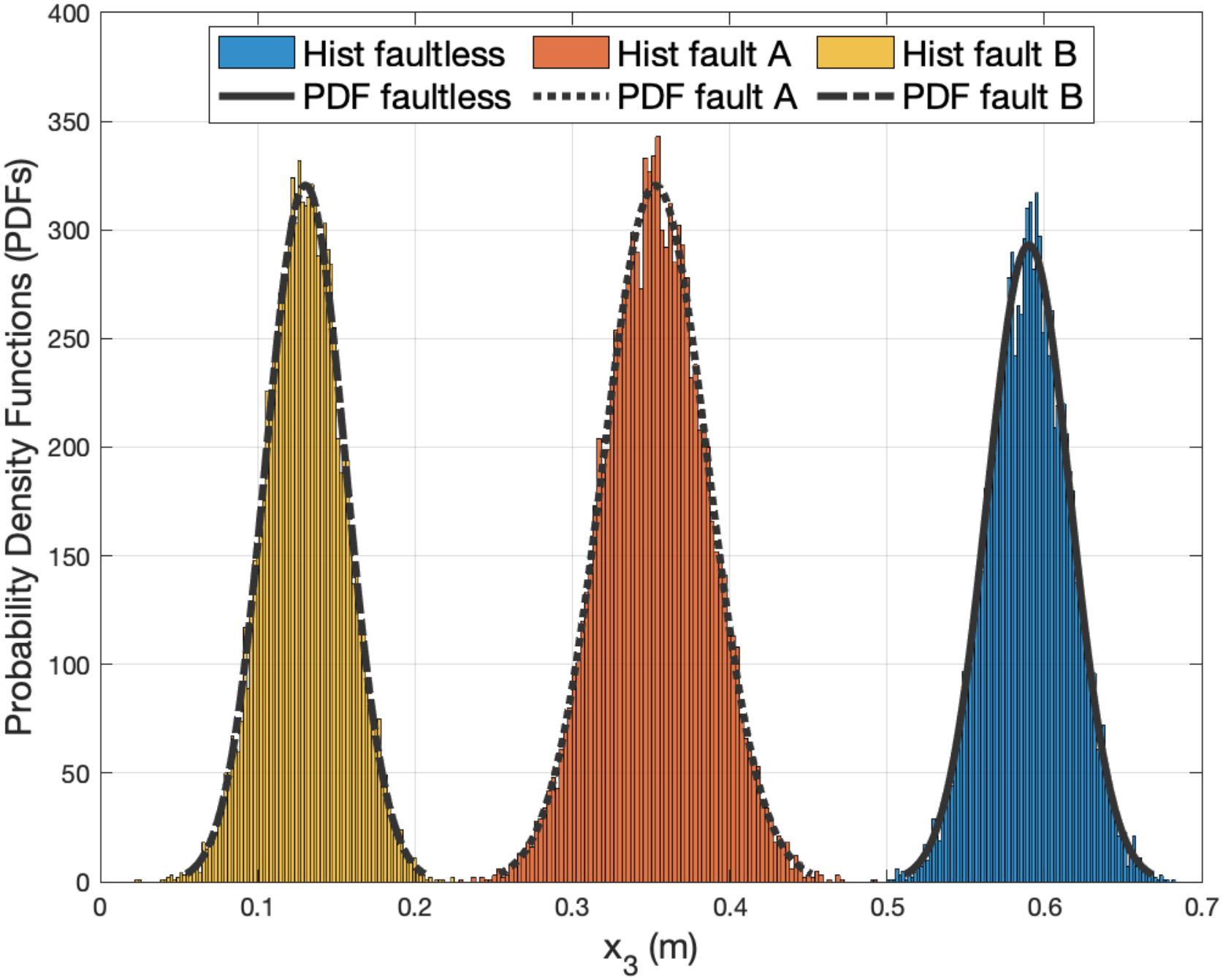}}\hfill
\subfloat[][Area under the nominal input signal]{
\label{fig3.3} 
\includegraphics[ width=0.48\linewidth]{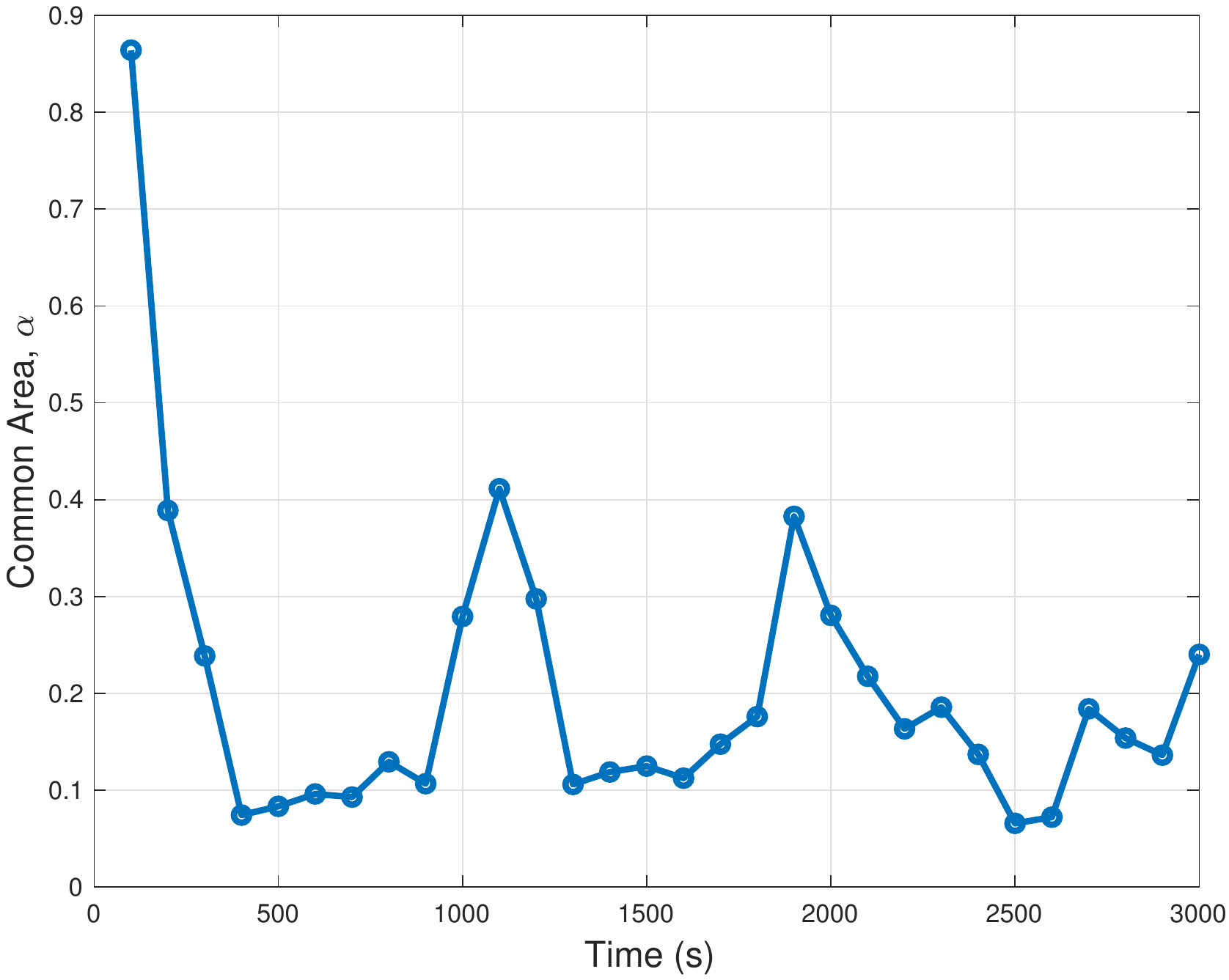}}\hspace{.02cm}
\subfloat[][Area under the distributionally robust input signal]{
\label{fig3.4} 
\includegraphics[width=0.48\linewidth]{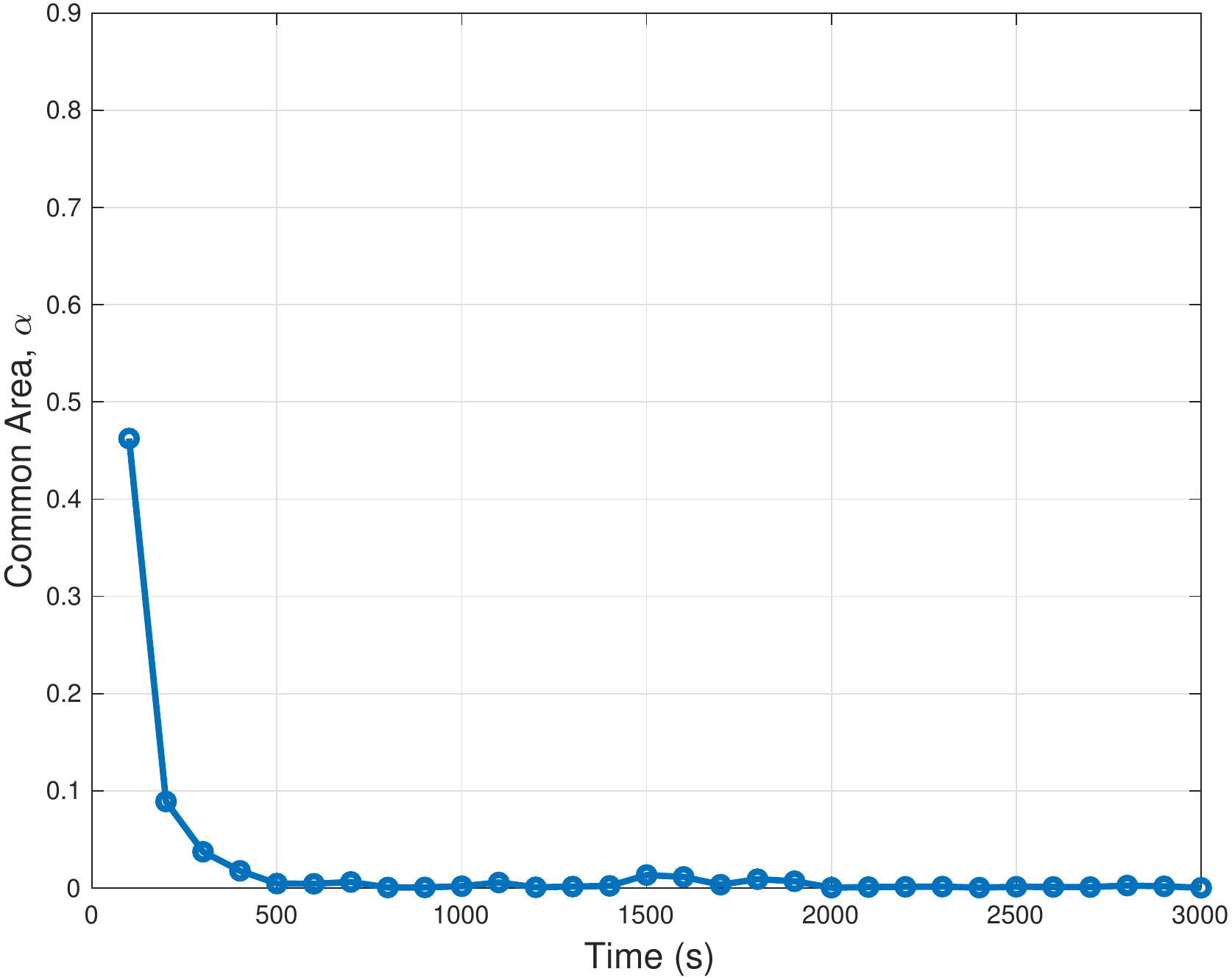}}
\caption[]{Top plots depict the input design signals. Middle plots depict the true output PDFs at measurement time  $t_m=3000s$. Bottom plots depict the resulting  common area at each measurement time.}
\label{Hill3}
\end{figure}

\section{Conclusion}\label{sec.conclusion}
In this paper, we have proposed a distributionally robust,  AFD approach for a class nonlinear systems with respect to ambiguous distribution information of uncertain model parameters. With respect to existing literature, TV distance metric is introduced as a measure for the evaluation of the common area between distributions corresponding to multiple fault scenarios, and as an information constraint to model ambiguity. Results indicate the capability of the proposed approach to restrict the influences of uncertainty, and its effectiveness on the detection and isolation of faults. 

Future research directions include the extension of the results to the case where the distributionally robust separating input signal is applied as an auxiliary to the nominal/stabilizing control input signal, and to study possible side effects to the system behaviour. In addition, to hedge against the impact of inaccuracy in PDF estimation by working directly on the resulting probability histograms, utilizing the alternative definition of the TV distance metric as given by \eqref{tvdfn}.

\flushend

\bibliographystyle{IEEEtran}
\bibliography{autosam}

\end{document}